\documentclass[11pt]{article}
\usepackage{amsfonts}
\usepackage{mathrsfs}
\usepackage{amsmath}
\usepackage{amssymb}
\usepackage{arydshln}
\usepackage{graphicx}
\usepackage{epsfig}
\usepackage{epic}
\usepackage{cite}
\usepackage{setspace}
\usepackage{amsthm,latexsym}
\usepackage{float}
\usepackage[utf8]{inputenc}

\renewcommand{\paragraph}{\roman{paragraph}}
\newtheorem{theorem}{ \bf Theorem}[section]
\newtheorem{lemma}[theorem]{ \bf Lemma}

\begin{document}

\title{\sf Multiplicity of Laplacian eigenvalue 1 of a graph}
\author{
	Yuhao Zhou,
    Fenglei Tian \thanks{Corresponding author. E-mail address: tflqsd@qfnu.edu.cn.  Supported by ``the Natural Science Foundation of Shandong Province (No. ZR2024MA032), the National Natural Science Foundation of China (No. 12101354) and the Youth Innovation Team Project of Shandong Province Universities (No. 2023KJ353)''.}
~~\\
\noindent{\small\it \ School of Management, Qufu Normal University, Rizhao, China.}\\
   }
\date{}
\maketitle

\noindent {\bf Abstract:} \ Let $G$ be a graph with $p(G)$ pendant vertices and $q(G)$ quasi-pendant vertices. Denote by $m_{L(G)}(\lambda)$ the multiplicity of $\lambda$ as a Laplacian eigenvalue of $G$. A graph $G$ is called reduced, if $p(G)=q(G)$. It is known that deleting a pendant path $P_3$ from a graph $G$  cannot change $m_{L(G)}(1)$. By the reduction operation for a graph (defined by Tian and Wong, 2026), we could turn to the reduced graphs with each quasi-pendant vertex of degree 2 to investigate $m_{L(G)}(1)$. Then let $T$ be a reduced tree on $n(\geq 7)$ vertices with each quasi-pendant vertex of degree 2 and without pendant path $P_3$. We first prove that
\begin{equation*}
    m_{L(T)}(1)\leq \frac{n-5}{6}
\end{equation*}
and the extremal trees attaining the upper bound are determined completely.
In addition, let $G$ be an arbitrary connected reduced graph with order $n\geq 6$ and size $m$. Denote by $c=m-n+1$ the first Betti number of $G$, then we obtain
\begin{equation*}
    m_{L(G)}(1)\leq c+\frac{n-2}{4},
\end{equation*}
and the extremal graphs attaining the upper bound are characterized completely.

\vskip 2 mm
\noindent{\bf Keywords:}\ Laplacian eigenvalues; Eigenvalue multiplicity; the First Betti number; Cyclomatic number

\section{Introduction}

\quad
In this paper, we focus on simple, undirected and connected graphs. Assume that $G$ is a graph with the vertex set $V(G)$ and the edge set $E(G)$, and let the vertex $u\in V(G)$. Denote by $d(u)$ the degree of a vertex $u$, namely the number of vertices adjacent to $u$. A vertex $u$ is said to be a pendant vertex if $d(u)=1$, and its neighbor is called quasi-pendant vertex. If $u$ and $v$ are adjacent, then we write $u\thicksim v$ for brevity. The diameter of $G$ is denoted by $diam(G)$. The first Betti number of a connected graph $G$ is $c=|E(G)|-|V(G)|+1$, which is also called the cyclomatic number of $G$. Denote by $G-e$ a spanning subgraph of graph $T$ obtained by removing an edge $e$ from $G$. Let $p(G)$ (resp., $q(G)$) be the number of the pendant vertices (resp., quasi-pendant vertices) of  $G$. Denote by $|G|$ the order of $G$. For $S\subset V(G)$, let $G-S$ be an induced subgraph of $G$ obtained by deleting the vertices of $S$ with the incident edges. For brevity, we sometimes write $G-G_1$ instead of $G-V(G_1)$, where $G_1$ is an induced subgraph of $G$. Let $G-e$ be a spanning subgraph of $G$ obtained by removing an edge $e$.
We say that $G$ is reduced if $p(G)=q(G)$. We can obtain a reduced graph from a given graph $G$ by removing certain pendant vertices until every quasi-pendant vertex is adjacent to exactly one pendant vertex. Let $m_{M}(\lambda)$ be the multiplicity of the eigenvalue $\lambda$ of the matrix $M$. The Laplacian matrix of a graph $G$ is written as $L(G)=D(G)-A(G)$, where $D(G)$ is the degree diagonal matrix of $G$ and $A(G)$ is the adjacency matrix of $G$.

Much work has been done on the multiplicity of the Laplacian eigenvalues of a graph (see \cite{Akbari,Andrade,Barik,Du,Faria,Guo,Tian1,Wen} for instance). Here we focus on the multiplicity of Laplacian eigenvalue $1$ of graphs.
By Faria's result \cite{Faria} in 1985, it follows that $m_{L(G)}(1)\geq p(G)-q(G)$ for a graph $G$, and the upper bound is achieved when each vertex with degree greater than $1$ is a quasi-pendant vertex (see \cite{Andrade}).
Grone, Merris and Sunder \cite{Grone} proved that if $\lambda >1$ is an integer and $\lambda$ is a Laplacian eigenvalue of a tree $T$ of order $n$, then $\lambda \mid n$ and $m_{L(T)}(\lambda)=1$. Moreover, they showed that $m_{L(T)}(\lambda)\leq p(T)-1$ for any Laplacian eigenvalue $\lambda$ of a tree $T$.
All the trees attaining the upper bound $p(T)-1$ were determined by Yang and Wang \cite{Yang} and Wong $et\ al.$ \cite{WongZX}, independently.
Guo, Feng and Zhang \cite{Guo} investigated the affection of adding an edge between two disjoint graphs on the multiplicity of Laplacian eigenvalues of graphs.
Assume that $T$ is a tree formed by adding an edge between a vertex of a tree $T_1$ and a vertex of a tree $T_2$. Then, the relationship among $m_{L(T)}(1)$, $m_{L(T_1)}(1)$ and $m_{L(T_2)}(1)$ was investigated by Barik, Lal and Pati \cite{Barik}.
Wen and Huang \cite{Wen} considered the multiplicity of Laplacian eigenvalue $1$ of a unicyclic graph.
For a reduced tree $T$ of order $n(\geq 6)$, Tian $et\ al.$ \cite{Tian1} proved that $m_{L(T)}(1)\leq \frac{n-2}{4},$ and all trees attaining the upper bound were determined.

We call $P_{k}$ a pendant path of $G$, if $G$ can be obtained from a graph $H$ and a path $P_{k}$ by joining a pendant vertex of $P_{k}$ with an arbitrary vertex of $H$. For brevity, denote by $\mathcal{P}^{k}$ a pendant path $P_k$ in a graph $G$. As shown in \cite{Grone}, removing a $\mathcal{P}^3$ from a graph does not alter the multiplicity of Laplacian eigenvalue $1$. Let $\hat{G}$ be the reduced graph of a graph $G$. In \cite{Guo,Tian}, they found the relationship between the multiplicity of Laplacian eigenvalue $1$ of $\hat{G}$ and $G$:
$$m_{L(G)}(1)=p(G)-q(G)+m_{L(\hat{G})}(1).$$
Inspired by the results above, to investigate the multiplicity of Laplacian eigenvalue $1$ of a graph, one can just need to focus on the reduced graph without $\mathcal{P}^3$. Then Tian and Wong \cite{Tian} proved that, for a reduced tree $T$ on $n(\geq 6)$ vertices without $\mathcal{P}^3$, $m_{L(T)}(1)\leq \frac{n-6}{4}$ and all extremal trees were characterized.

To simplify the structure of a graph, the authors of \cite{Tian} defined a technique, called the reduction operation of a graph.

\noindent\textbf{Reduction Operation} \ Let $u$ be a pendant vertex of a graph $G$ and $u$ is adjacent to $v$. Let $N_G(v) \setminus \{u\} = \{v_1, v_2, \dots, v_s\}$ be the neighbors (except $u$) of $v$. For the graph $G-\{u, v\}$, joining each $v_i$ ($1 \leq i \leq s$) to a vertex of a path $P_2$, then we obtain the reduction graph of $G$, denoted by $G'$. See Fig. \ref{fig.2:eps} for an instance.
If one do reduction operation recursively to $G$ until each quasi-pendant vertex has degree 2, then we call $G'$ the \textit{final reduction graph} of $G$.

\begin{figure}[htbp]
	\centering
	\includegraphics[width=0.7\textwidth]{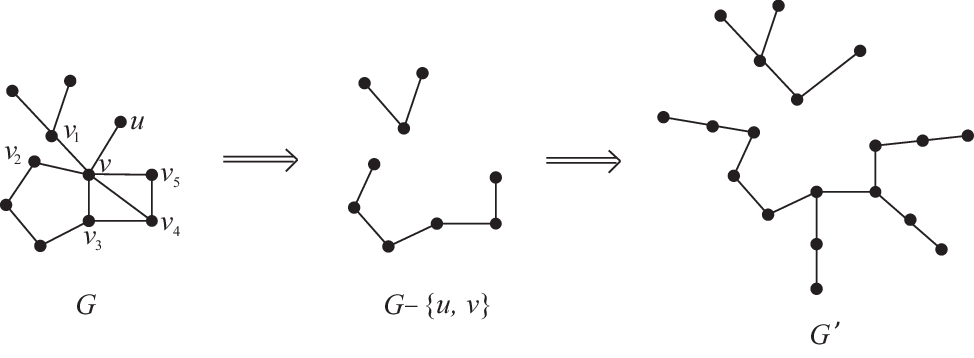}
	\caption{An example of doing reduction operation on a graph.}
	\label{fig.2:eps}
\end{figure}
According to the reduction operation, Tian $et\ al.$ \cite{Tian} showed that
$m_{L(G)}(1)= m_{L(G')}(1).$ Motivated by this result, to investigate $m_{L(G)}(1)$ of a graph $G$, we could consider its final reduction graph. Actually, we could study the multiplicity of Laplacian eigenvalue $1$ for the graph with each quasi-pendant vertex of degree 2. Thus, the first conclusion of this paper is obtained as below.

\begin{theorem}\label{thm:tree}\ Let $T$ be a reduced tree of order $n(\geq 7)$. If $T$ contains no $\mathcal{P}^3$ and each quasi-pendant vertex has degree $2$ in $T$, then
$$m_{L(T)}(1)\leq \frac{n-5}{6}$$
and the equality holds if and only if $T$ is isomorphic to the tree in Fig. $\ref{fig.1:eps}$.
\end{theorem}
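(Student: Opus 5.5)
We argue by induction on $n$, with a star-complement / vertex-deletion bound for eigenvalue multiplicity, in the same way \cite{Tian1,Tian} proved the bounds $\frac{n-2}{4}$ and $\frac{n-6}{4}$. Two basic facts are used throughout. First, by Cauchy interlacing applied to the principal submatrix of $L(T)$ indexed by $V(T)\setminus\{v\}$, which is $L(T-v)$ plus a diagonal perturbation on $N(v)$, one gets the edge-deletion bound $|m_{L(T)}(1)-m_{L(T-e)}(1)| \le 1$. Second, the multiplicity of $1$ is additive over components. Since $T$ is reduced, each quasi-pendant vertex has degree $2$, and $T$ has no $\mathcal{P}^3$, every pendant path is exactly a $\mathcal{P}^2$. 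Such a path $u w$ (with $u$ pendant) is attached to a vertex $x$ of degree at least $3$; otherwise $x$ would extend it to a $\mathcal{P}^3$ or would itself be quasi-pendant.

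The first step is to take a longest path $v_1v_2\cdots v_d$ in $T$. Then $v_1v_2$ is a $\mathcal{P}^2$ hanging at $v_3$, and every branch at $v_3$ off the path is also a $\mathcal{P}^2$ (by maximality and the structural restrictions). So $v_3$ carries $k\ge 2$ copies of $\mathcal{P}^2$, and possibly $v_3$ has only one further neighbour $v_4$. Examining $v_4$ similarly, its other branches are either $\mathcal{P}^2$'s or stars of $\mathcal{P}^2$'s centred at neighbours of $v_4$. Next we compute the local contribution of such a pendant structure. For a vertex $x$ carrying $k$ pendant $P_2$'s, an eigenvector for eigenvalue $1$ has the form $(a,a)$ on the pendant edge... but the actual form must be derived explicitly. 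Checking $L$ on $P_2$ with the edge to $x$ gives: at the pendant vertex $u$, $(1-1)z_u=-z_w$, so $z_w=0$; at $w$, $2z_w - z_u - z_x = z_w$, so $z_u=-z_x$. Thus each $\mathcal{P}^2$ is forced by the value at $x$, and the equation at $x$ then involves $k z_x$ plus the rest of the tree. I would use this to show that deleting the $k$ pendant $P_2$'s together with $v_3$ (cutting the edge $v_3v_4$) changes the multiplicity by at most one. This removes $2k+1\ge 5$ vertices, plus the vertices needed to repair the resulting tree so that it again satisfies the hypotheses.

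The main obstacle is this repair. After the deletion, $v_4$ may become a quasi-pendant vertex of degree greater than $2$, or the remaining tree may acquire a $\mathcal{P}^3$ or become non-reduced. To restore the hypotheses I would apply the reduction operation, which preserves $m_L(1)$, together with removal of $\mathcal{P}^3$ (also multiplicity-preserving). The counting must then show that every $6$ vertices deleted cost at most one unit of multiplicity, with the base cases $n\le 12$ checked directly. I expect the case analysis at $v_4$ to be the heart of the proof: $\deg v_4=2$ versus $\ge3$, and whether $v_4$ has further $\mathcal{P}^2$ branches. In particular, when $k=2$ and $\deg v_3=3$ only $5$ vertices are removed, so the extra vertex must be charged to the neighbourhood of $v_4$. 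This is exactly where the sharp constant $\frac{n-5}{6}$ and the extremal tree arise.

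For the equality characterisation, I would trace the induction and require every inequality to be tight. Each step must remove exactly $6$ vertices while lowering the multiplicity by exactly one, and the base case must be the $7$-vertex extremal tree. This forces the recursive structure shown in Fig. \ref{fig.1:eps}. The converse, that this tree attains the bound, follows by writing down the eigenvectors from the local relations $z_w=0$, $z_u=-z_x$ above.
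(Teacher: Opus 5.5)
Your outline stops where the real work starts. The one concrete move you commit to is: cut $v_3v_4$, split off the $P_2$-star at $v_3$, then repair. That move does not give the bound. Take $k=2$, $d(v_3)=3$, and $d(v_4)=3$ with the third neighbour $y$ of $v_4$ the centre of a $P_2$-star with exactly two arms. For example, join the middle vertices of two copies of the tree of Fig.~\ref{fig.1:eps} by an edge, giving $n=22$.

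Cutting $v_3v_4$ splits off a $P_5$, and the remainder $T-T_{v_3}$ is already in $\Omega$. The vertex $v_4$ just becomes an internal vertex of degree $2$, so no reduction or $\mathcal{P}^3$-removal happens. Induction then gives only $m_{L(T)}(1)\le 1+\frac{(n-5)-5}{6}=\frac{n-4}{6}$, which is above the target. Cutting $v_4v_5$ instead is no better, because the piece split off is the $11$-vertex extremal tree itself, with multiplicity $1$. So ``charging the extra vertex to the neighbourhood of $v_4$'' needs an actual operation, and edge cuts do not supply one here.

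The missing idea is the paper's. Add a new pendant vertex $u$ at $v_4$, which costs at most $1$ by Lemma \ref{lem:edge}(ii). Then apply the reduction operation at $u,v_4$, which preserves the multiplicity by Lemma \ref{lem:tree}. This detaches every off-path branch of $v_4$ as a $P_4$ or a $P_2$-star, each with multiplicity $0$ by Lemmas \ref{lem:path} and \ref{lem:star}. It leaves a single component of order at most $n-8$. The paper combines this with two other moves:
\begin{itemize}
\item when $d(v_4)=d(v_5)=2$, cut $v_5v_6$, so the discarded piece is a $P_2$-star with at least three arms and at least $7$ vertices;
\item when $d(v_4)=2<d(v_5)$, add a pendant vertex at $v_5$ and reduce there.
\end{itemize}
In every case one unit of multiplicity costs at least $7$ vertices, not $6$.

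That strict count is also what the equality statement needs, and your equality sketch is wrong. There is no $7$-vertex extremal tree: $\frac{7-5}{6}=\frac13$, and the only member of $\Omega$ on $7$ vertices is the $P_2$-star with three arms, which has multiplicity $0$. Equality forces $n\equiv 5 \pmod 6$.

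The extremal tree is a single tree on $11$ vertices. It has two vertices $x,y$, each carrying two pendant $P_2$'s, both joined to a common vertex $c$ of degree $2$. An eigenvector for $1$ takes the value $2$ at $c$, $1$ at $x$ and $y$, $0$ at the quasi-pendant vertices and $-1$ at the pendant vertices. There is no recursive family of orders $17,23,\dots$. The paper proves $m_{L(T)}(1)<\frac{n-5}{6}$ for every $n\ge 12$, and an accounting of ``six vertices per unit, tight at every step'' cannot deliver that uniqueness.

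A smaller point: your derivation of the edge-deletion bound is off. The principal submatrix of $L(T)$ on $V(T)\setminus\{v\}$ is $L(T-v)$ plus a diagonal term on $N(v)$, which is not $L(T-e)$. The bound holds because $L(T)-L(T-e)$ has rank one.
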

\begin{figure}[htbp]
  \centering
  \includegraphics[width=0.25\textwidth]{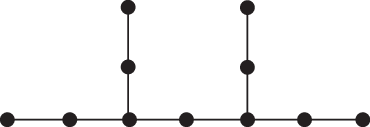}
  \caption{The extreme graph in Theorem \ref{thm:tree}.}
  \label{fig.1:eps}
\end{figure}

Apart from trees, the multiplicity of Laplacian eigenvalue $1$ of a unicyclic graph was also investigated. Wen and Huang \cite{Wen} proved that $m_{L(G)}(1)\leq n-2$ for a unicyclic graph $G$ of order $n$. Tian and Wong \cite{Tian} obtained $m_{L(G)}(1)\leq \frac{n}{4}$ for a reduced unicyclic graph $G$ without $\mathcal{P}^3$. To generalize these results, we further concentrate on $m_{L(G)}(1)$ for an arbitrary reduced graph $G$ and obtain the following conclusion.

\begin{theorem}\label{thm:circle}\ Let $G$ be a reduced graph on $n(\geq 6)$ vertices and $m$ edges.  Let $c=m-n+1$ be the first Betti number (or cyclomatic number) of $G$. Then
$$m_{L(T)}(1)\leq c+\frac{n-2}{4},$$
and equality holds if and only if $G$ is the cycle $C_6$ with $c=1$ or $G$ is a tree with $c=0$ satisfies the following two conditions:\\
(i) each vertex of degree greater than $2$ is a quasi-pendant vertex in $G$;\\
(ii) each of the final reduction components of $G$ is a path $P_6$.
\end{theorem}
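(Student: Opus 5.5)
Our plan is to prove Theorem \ref{thm:circle} by induction on the first Betti number $c$, with the tree case $c=0$ as the base. For $c=0$, $G$ is a reduced tree of order $n\ge 6$. The bound $m_{L(G)}(1)\le \frac{n-2}{4}$ is the result of Tian et al.\ \cite{Tian1}, which we may quote. For the equality characterization we will pass to the final reduction graph. Since $m_{L(G)}(1)=m_{L(G')}(1)$, and each reduction step replaces a quasi-pendant vertex of degree $d$ by $d-1$ copies of $P_2$, the vertex count changes in a controlled way. The final reduction graph splits into components in which every quasi-pendant vertex has degree $2$.

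For each such component $T_i$ we delete pendant $\mathcal{P}^3$'s, which does not change the multiplicity, and then apply Theorem \ref{thm:tree} together with the bound $\frac{n-6}{4}$ of \cite{Tian}. This shows that $m_{L(T_i)}(1)\le \frac{|T_i|-2}{4}$, with equality only for $T_i\cong P_6$. Small components ($|T_i|\le 6$) are handled by direct computation; for example, $m_{L(P_6)}(1)=1$. Summing over components and comparing with $n$ then gives conditions (i) and (ii). Condition (i) is what keeps the vertex count from being wasted during reduction: a vertex of degree at least $3$ that is not quasi-pendant would contribute to $n$ without raising the multiplicity.

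For $c\ge 1$, choose an edge $e$ lying on a cycle and chosen not incident to any pendant vertex, so that $G-e$ stays connected. We will check that $G-e$ is still reduced, or becomes reduced after a harmless adjustment; the delicate case is when deleting $e$ creates new pendant vertices. Then use the standard interlacing fact for Laplacians: deleting one edge is a rank-one perturbation $L(G)=L(G-e)+bb^{T}$, so $|m_{L(G)}(1)-m_{L(G-e)}(1)|\le 1$. Induction gives
\begin{equation*}
m_{L(G)}(1)\le 1+(c-1)+\frac{n-2}{4}.
\end{equation*}

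For equality we need equality both in the induction hypothesis and in the interlacing step. If $c\ge 2$, then $G-e$ has Betti number $c-1\ge1$ and attains equality, so by induction $G-e\cong C_6$. But $C_6$ has $c-1=1$ and $n=6$, and adding an edge to $C_6$ does not raise the multiplicity, which we check by direct computation; this is a contradiction. If $c=1$, then $G-e$ is an extremal tree from the base case that becomes a unicyclic reduced graph after adding one edge. We check that $m_{L}(1)$ increases only when the result is $C_6$ (here $G-e=P_6$).

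The main obstacle is the preservation of reducedness under edge deletion, since deleting a cycle edge can create a new pendant vertex. The fix we would try first is a case split on the cycle's structure. If every cycle vertex has degree $2$ apart from attachments, choose $e$ between two vertices of degree at least $3$ when possible. Otherwise $G$ is essentially a cycle with pendant structure, and there we use the unicyclic bound $m_{L(G)}(1)\le n/4$ of \cite{Tian} (after removing $\mathcal{P}^3$'s) directly. The margin $c=1$ versus $\frac{n}{4}-\frac{n-2}{4}$ absorbs the discrepancy, and a sharper analysis of that case pins down $C_6$.
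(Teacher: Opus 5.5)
Your proposal has a genuine gap in the unicyclic case $c=1$, which is where essentially all the work lies. For equality you reduce to this: $G-e$ is an extremal tree of Lemma \ref{lem:reduced}, and ``we check'' that adding one edge raises the multiplicity only when $P_6$ becomes $C_6$. Those extremal trees form an infinite family, so this is not a finite verification. It is the whole $c=1$ characterization restated, and you give no mechanism for it.

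The paper avoids this by choosing the deleted edge so that strictness is forced. Suppose a cycle vertex $x$ has degree $>2$ and some cycle edge $e$ can be deleted with $x$ still of degree $>2$ and not quasi-pendant in $G-e$. Then $G-e$ violates condition (i) of Lemma \ref{lem:reduced}, so $m_{L(G-e)}(1)<\frac{n-2}{4}$ and hence $m_{L(G)}(1)<1+\frac{n-2}{4}$. Only the few local configurations where no such edge exists ($G_1,G_3,G_4,G_6$) and the cycle $C_n$ itself (via its explicit spectrum) are handled separately.

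The reducedness issue you flag is not a side issue either, and edge deletion alone cannot handle it. Take the $4$-cycle $u_1u_2u_3u_4$ with one pendant vertex attached to $u_1$ and another to $u_3$; this graph has $n=6$ and is reduced. Deleting any cycle edge leaves a tree in which $u_1$ or $u_3$ has two pendant neighbours. By $m_{L(H)}(1)=p(H)-q(H)+m_{L(\hat{H})}(1)$, that tree has multiplicity $1+m_{L(P_5)}(1)=1$. So your route yields only $m_{L(G)}(1)\le 2=1+\frac{6-2}{4}$, which is not even strict; in fact $m_{L(G)}(1)=0$.

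The tool you are missing is Lemma \ref{mainlemma}. If a cycle vertex $u$ is quasi-pendant and $v$ is a neighbouring cycle vertex, replace the edge $uv$ by a new pendant $P_2$ attached at $v$. This keeps the multiplicity of $1$ unchanged and produces a reduced graph with Betti number $c-1$ on $n+2$ vertices. It gives $m_{L(G)}(1)\le\frac{n}{4}<1+\frac{n-2}{4}$ when $c=1$, and a strict inequality in the inductive step when $c\ge 2$.

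Your fallback to the $\frac{n}{4}$ bound of \cite{Tian} does not substitute for this as written:
\begin{itemize}
\item you do not specify when it is invoked;
\item stripping $\mathcal{P}^3$'s can itself destroy reducedness;
\item the bound as you quote it cannot hold for every reduced unicyclic graph without $\mathcal{P}^3$, since $m_{L(C_6)}(1)=2>\frac{6}{4}$, so its precise hypotheses would have to be stated and checked.
\end{itemize}
Separately, your re-derivation of the tree equality case is unnecessary, because Lemma \ref{lem:reduced} already includes it.
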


Before proving Theorems \ref{thm:tree} and \ref{thm:circle} in Sections $3$ and $4$ respectively, we first introduce several lemmas in Section $2$.

\section{ Preliminaries }

\begin{lemma}\label{lem:edge}{\rm \cite{Grone}} \  Let $G-e$ be the graph obtained from a graph $G$ by deleting an edge $e$. Let $G-u$ be the graph obtained from a graph $G$ by deleting a pendant vertex $u$ and its incident edge. Then \\
 (i) \ $|m_{L(G)}(1)-m_{L(G-e)}(1)|\leq 1;$\\
 (ii) \ $|m_{L(G)}(1)-m_{L(G-u)}(1)|\leq 1.$
\end{lemma}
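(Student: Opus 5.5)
The statement is Lemma~\ref{lem:edge}, a rank-one perturbation result. The plan is to treat both parts with interlacing and a rank argument applied to the singular matrices $L(G)-I$ and $L(G-e)-I$ (resp.\ their analogues for $G-u$).

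\textbf{Part (i).} For an edge $e=xy$ we have
$$L(G)=L(G-e)+(\mathbf{e}_x-\mathbf{e}_y)(\mathbf{e}_x-\mathbf{e}_y)^{T},$$
so $L(G)-I$ and $L(G-e)-I$ differ by a matrix of rank one. For any eigenvalue $\lambda$,
$$m_{L(G)}(\lambda)=n-\operatorname{rank}(L(G)-\lambda I).$$
The rank of a matrix changes by at most one under a rank-one perturbation, so
$$|m_{L(G)}(1)-m_{L(G-e)}(1)|\le 1.$$
Equivalently, one could cite Cauchy-type interlacing for positive semidefinite rank-one updates.

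\textbf{Part (ii).} Let $u$ be a pendant vertex with neighbour $v$. First delete the edge $uv$. By part (i) this changes the multiplicity of $1$ by at most one, but the result is $(G-u)\cup K_1$. Since $K_1$ has Laplacian eigenvalue $0$ only, we get
$$m_{L((G-u)\cup K_1)}(1)=m_{L(G-u)}(1).$$
Hence $|m_{L(G)}(1)-m_{L(G-u)}(1)|\le 1$.

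A more direct route uses interlacing. $L(G-u)$ equals the principal submatrix of $L(G)$ obtained by deleting row and column $u$, minus $\mathbf{e}_v\mathbf{e}_v^{T}$. Deleting a row and column changes the multiplicity of any eigenvalue by at most one. However, the extra diagonal correction is a second rank-one change, and combining the two would only give a bound of $2$.

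So the edge-deletion route is the one to use. Its only subtlety is the observation that an isolated vertex contributes no eigenvalue $1$. No step is a real obstacle; the one point to state carefully is the rank-one rank-change inequality.
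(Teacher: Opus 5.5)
Your proof is correct and is essentially the standard argument; the paper gives no proof of its own and only cites Grone, Merris and Sunder, where the result rests on the same rank-one observation $L(G)=L(G-e)+(\mathbf{e}_x-\mathbf{e}_y)(\mathbf{e}_x-\mathbf{e}_y)^{T}$, usually phrased as Laplacian interlacing under edge deletion. That is equivalent to your rank-subadditivity step, since $L$ is symmetric and so $m_{L(G)}(1)$ is the nullity of $L(G)-I$; and your reduction of (ii) to (i) via $G-uv=(G-u)\cup K_1$, where $K_1$ contributes only the eigenvalue $0$, is the usual way to handle the pendant-vertex case.
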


\begin{lemma}\label{lem:path}{\rm \cite{Grone}} \ Let $T$ be a tree obtained from a path $P_3$ and a tree $T_1$ by joining a vertex of $T_1$ to a pendant vertex of $P_3$. Then
$$m_{L(T)}(1)=m_{L(T_1)}(1).$$
\end{lemma}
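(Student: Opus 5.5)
The statement immediately above is Lemma \ref{lem:path}: attaching a pendant $P_3$ to a tree $T_1$ does not change $m_{L(\cdot)}(1)$. The plan is to write down explicit eigenvectors for the eigenvalue $1$ and show that the eigenspaces $\ker(L(T)-I)$ and $\ker(L(T_1)-I)$ have the same dimension via an explicit linear bijection.

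Let the pendant path be $a\sim b\sim c$, with $c$ joined to the vertex $w$ of $T_1$. Take $x$ with $L(T)x=x$ and read off the eigen-equations at the three new vertices:
\begin{itemize}
\item At $a$ (degree $1$): $x_a-x_b=x_a$, so $x_b=0$.
\item At $b$ (degree $2$): $2x_b-x_a-x_c=x_b$, so $x_a=-x_c$.
\item At $c$ (degree $2$): $2x_c-x_b-x_w=x_c$, so $x_c=x_w$.
\end{itemize}
Hence $x_b=0$, $x_c=x_w$ and $x_a=-x_w$.

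Now check the equation at $w$, whose degree is $d_{T_1}(w)+1$:
$$(d_{T_1}(w)+1)x_w-\sum_{y\sim w,\ y\in T_1}x_y-x_c=x_w .$$
Since $x_c=x_w$, the extra term cancels and this becomes exactly the eigen-equation of $L(T_1)$ at $w$. The equations at all other vertices of $T_1$ are unchanged. Therefore the restriction $x\mapsto x|_{T_1}$ maps $\ker(L(T)-I)$ into $\ker(L(T_1)-I)$.

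Conversely, given $y\in\ker(L(T_1)-I)$, extend it by setting
$$x_c=y_w,\qquad x_b=0,\qquad x_a=-y_w .$$
By the same computations this extension satisfies every eigen-equation of $L(T)$.

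The restriction map is injective, because the values on $a,b,c$ are determined by $x_w$; the extension shows it is surjective. So it is a linear isomorphism between the two eigenspaces, and $m_{L(T)}(1)=m_{L(T_1)}(1)$, using that $L$ is symmetric so geometric and algebraic multiplicities agree.

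There is no real obstacle here. The one step to get right is the bookkeeping at $w$: its degree rises by one and it gains the new neighbour $c$. The cancellation $x_c=x_w$ is exactly what makes the $P_3$ invisible, and it works only because $\lambda=1$. The argument never uses that $T_1$ is a tree, so the same proof applies to any graph.
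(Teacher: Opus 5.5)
Your proof is correct. The eigen-equations at $a$, $b$, $c$ force $x_b=0$, $x_c=x_w$ and $x_a=-x_w$. The new term $-x_c$ at $w$ then exactly cancels the increase of $d(w)$. So restriction to $V(T_1)$ is a linear isomorphism from $\ker(L(T)-I)$ onto $\ker(L(T_1)-I)$.

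The paper gives no argument for this lemma; it simply quotes it from Grone, Merris and Sunder. Your explicit eigenvector bijection is therefore a short self-contained replacement rather than a different version of the paper's proof. As you observe, it never uses that $T_1$ is a tree. That is exactly the graph version the introduction asserts (``removing a $\mathcal{P}^3$ from a graph does not alter the multiplicity of Laplacian eigenvalue $1$'').

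One minor correction to your closing aside: the cancellation is not special to $\lambda=1$. Running your computation for a general eigenvalue $\lambda$ gives $x_b=(1-\lambda)x_a$, $x_c=(\lambda^2-3\lambda+1)x_a$ and $x_w-x_c=\lambda(1-\lambda)(\lambda-3)\,x_a$. So the same bijection also works for $\lambda=3$, and trivially for $\lambda=0$. For example, $P_3$ and $P_6$ both have $3$ as a simple Laplacian eigenvalue. This has no bearing on the lemma itself.
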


\begin{lemma}\label{mainlemma}{\rm \cite{Tian}} \ Let $u$ be a pendant vertex of a graph $G$ and  let $v$ be the neighbor of $u$. Suppose that $d_v\geq 3$ and $w$, distinct from $u$, is adjacent $v$. Let $P_2=xy$ be a path on two vertices. Denote by $\Gamma$ the graph obtained from $G-e_{vw}$ and $P_2=xy$ by joining $w$ with  $y$. Then we have
$$m_{L(G)}(1)=m_{L(\Gamma)}(1).$$
\end{lemma}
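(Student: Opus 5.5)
The statement immediately above is Lemma~\ref{mainlemma}, which is quoted from \cite{Tian}. The plan is to compare the $1$-eigenspaces of $L(G)$ and $L(\Gamma)$ directly through the eigen-equations $\sum_{z\sim a}(x_a-x_z)=x_a$ at each vertex $a$, and to build a linear bijection between the two eigenspaces.

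\emph{Structure of a $1$-eigenvector of $L(G)$.} At the pendant vertex $u$ the equation is $x_u-x_v=x_u$, so $x_v=0$. At $v$ the equation then reads $-\sum_{z\sim v}x_z=0$, i.e.\ $x_u+x_w+\sum_{z\sim v,\,z\neq u,w}x_z=0$. All other equations involve $v$ only through the term $-x_v=0$, so they are unaffected by the value at $v$.

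\emph{Structure of a $1$-eigenvector of $\Gamma$.} In $\Gamma$ the vertex $x$ is pendant with neighbour $y$, so the same argument gives $x_y=0$. The equation at $y$ then forces $x_x+x_w=0$. Since $d_\Gamma(v)=d_G(v)-1\geq 2$, $u$ is still pendant at $v$, so $x_v=0$ again. The equation at $v$ in $\Gamma$ becomes $x_u+\sum_{z\sim v,\,z\neq u,w}x_z=0$.

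\emph{The map.} Given a $1$-eigenvector of $L(G)$, keep all entries on $V(G)$, except replace $x_u$ by $x_u+x_w$. Set $x_y=0$ and $x_x=-x_w$. I then check each equation of $\Gamma$:
\begin{itemize}
\item At $u$: the new value $x_u+x_w$ must satisfy $x'_u-x_v=x'_u$, which holds because $x_v=0$.
\item At $v$: the condition $(x_u+x_w)+\sum_{z\neq u,w}x_z=0$ is exactly the old equation at $v$.
\item At $w$: in $G$ the equation contains $-x_v=0$ and $d_G(w)$. In $\Gamma$, $w$ loses neighbour $v$ and gains $y$, so its degree is unchanged. Since $x_y=0=x_v$, the equation is identical.
\item At $y$: $2x_y-x_x-x_w=x_w-x_w=0=x_y$, as required.
\item At $x$: $x_x-x_y=x_x$, which holds.
\item All other vertices: their equations are untouched.
\end{itemize}
The inverse map subtracts $x_w$ from $x_u$ and deletes the entries at $x$ and $y$. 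Both maps are linear and mutually inverse on the eigenspaces, so $m_{L(G)}(1)=m_{L(\Gamma)}(1)$.

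The only delicate point is the verification at $v$. There the hypothesis $d_v\geq 3$ is needed: it guarantees that $u$ remains a pendant vertex attached to $v$ in $\Gamma$, which is what forces $x_v=0$ in $\Gamma$ as well. It also ensures $w\neq u$ is a genuine second neighbour, so the term $x_w$ can be transferred onto $u$.
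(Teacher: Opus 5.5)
The paper gives no proof of this lemma; it is quoted from \cite{Tian}, so there is no in-paper argument to compare yours with. Judged on its own, your eigenvector bijection is a correct, self-contained proof. The checks at $v$, $w$ and $y$ are the right ones. The equations at all other vertices are untouched because $u$ is adjacent only to $v$, and $w$'s equation is unchanged because $d_\Gamma(w)=d_G(w)$. Since $L$ is symmetric, the dimension of the $1$-eigenspace equals the multiplicity.

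Two small corrections.

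First, you only verify that the forward map sends $1$-eigenvectors of $L(G)$ to $1$-eigenvectors of $L(\Gamma)$. For the two maps to be mutually inverse \emph{on the eigenspaces}, you must also check the reverse direction. That is, subtracting $x_w$ from $x_u$ and deleting the entries at $x,y$ must send a $1$-eigenvector of $L(\Gamma)$ to one of $L(G)$. This follows at once from facts you already derived for $\Gamma$: $x_v=x_y=0$, $x_x=-x_w$, and the $v$-equation of $\Gamma$. It is half of the bijection, though, and should be stated.

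Second, your closing remark about $d_v\geq 3$ is inaccurate. The vertex $u$ stays pendant at $v$ in $\Gamma$ regardless, since only the edge $vw$ with $w\neq u$ is removed. Your computation never uses $d_v\geq 3$. For $d_v=2$ the $v$-equation in $\Gamma$ reads $0=x'_u$, which is still equivalent to the old one. In that case $\Gamma$ is just an isomorphic copy of $G$ plus an isolated $K_2$, whose Laplacian spectrum is $\{0,2\}$. The hypothesis only serves to make the operation a genuine reduction step.

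For comparison, here is a compact matrix version of the same argument. In $L-I$, the rows of the pendant vertices $u$ and $x$ each have a single nonzero entry. They can therefore be used to clear the rows and columns of $v$ and $y$. After this, both $L(G)-I$ and $L(\Gamma)-I$ reduce to the same principal submatrix on $V(G)\setminus\{u,v\}$. This gives $\operatorname{rank}(L(\Gamma)-I)=\operatorname{rank}(L(G)-I)+2$, and hence equal nullities, since $|\Gamma|=|G|+2$.
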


The following result can be obtained by recursively using Lemma \ref{mainlemma}.
\begin{lemma}\label{lem:tree}{\rm \cite{Tian}} \ Let $u$ be a pendant vertex of a graph $G$ and $u\sim v$ with $d_{v}\geq 3$. Let $G'$ be the reduction graph of $G$ with respect to $u$ and $v$. Then
$$m_{L(G)}(1)=m_{L(G')}(1).$$
\end{lemma}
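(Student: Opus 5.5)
The plan is to obtain $G'$ from $G$ by applying Lemma \ref{mainlemma} once for each neighbour of $v$ except one, and then to observe that the last remaining neighbour is already attached to $v$ in exactly the way the reduction operation prescribes. Write $N_G(v)\setminus\{u\}=\{v_1,\dots,v_s\}$. Since $d_v\geq 3$, we have $s\geq 2$. We build a sequence of graphs $G=\Gamma_0,\Gamma_1,\dots,\Gamma_{s-1}$. For $1\le i\le s-1$, the graph $\Gamma_i$ is obtained from $\Gamma_{i-1}$ by applying Lemma \ref{mainlemma} with the pendant vertex $u$, its neighbour $v$, and $w=v_i$. That is, we delete the edge $vv_i$ and join $v_i$ to the end vertex $y_i$ of a new path $P_2=x_iy_i$.

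The first step is to check that the hypotheses of Lemma \ref{mainlemma} hold at every stage.
\begin{itemize}
\item In $\Gamma_{i-1}$, the vertex $u$ is still pendant with unique neighbour $v$, because no operation touches the edge $uv$ or adds edges at $u$.
\item The degree of $v$ in $\Gamma_{i-1}$ is $1+(s-i+1)=s-i+2$. This is at least $3$ precisely when $i\le s-1$, so the condition $d_v\ge 3$ holds at each step.
\item The vertex $v_i$ is still adjacent to $v$ in $\Gamma_{i-1}$ and is distinct from $u$.
\end{itemize}
Hence Lemma \ref{mainlemma} gives $m_{L(\Gamma_{i-1})}(1)=m_{L(\Gamma_i)}(1)$ for each $i$. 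Chaining these equalities yields $m_{L(G)}(1)=m_{L(\Gamma_{s-1})}(1)$.

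The second step is to identify $\Gamma_{s-1}$ with $G'$. In $\Gamma_{s-1}$ the vertex $v$ has degree $2$, with neighbours $u$ and $v_s$, and $u$ is pendant. So the set $\{v,u\}$ induces a path $P_2$ whose end $v$ is joined to $v_s$ and which has no other edges to the rest of the graph. Each $v_i$ with $i\le s-1$ is joined to its own new path $x_iy_i$ through $y_i$. All other edges of $G$ not incident with $v$ are untouched. Therefore $\Gamma_{s-1}$ is exactly the graph $G-\{u,v\}$ with each $v_i$ ($1\le i\le s$) joined to a vertex of its own copy of $P_2$, where the copy for $v_s$ is $vu$. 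This is the reduction graph $G'$, so $m_{L(G)}(1)=m_{L(G')}(1)$.

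The only point needing care is the degree bookkeeping in the first step. Lemma \ref{mainlemma} must be applied exactly $s-1$ times rather than $s$ times, because its hypothesis $d_v\ge 3$ would fail at the last step. This is harmless, since after $s-1$ applications the old pair $v,u$ already serves as the required $P_2$ attached to $v_s$. No spectral computation is needed beyond Lemma \ref{mainlemma} itself.
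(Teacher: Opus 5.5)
Your proof is correct and follows the route the paper indicates: the paper only remarks that this lemma follows by recursively applying Lemma \ref{mainlemma}, and your argument supplies the bookkeeping behind that remark. In particular, you stop after $s-1$ applications so that $d_v\ge 3$ holds at every step, and you let the original pair $vu$ serve as the $P_2$ attached to $v_s$; this is exactly the detail needed for the recursion to produce $G'$.
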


\begin{lemma}\label{lem:reduced}{\rm \cite{Tian1}} \ Let $T$ be a reduced tree with $n\geq 6$ vertices. Let $T_{i}'$ ($i=1,2,\cdots,k$) be all the final reduction components. Then
\begin{equation*}
	m_{L(T)}(1) \leq \frac{n-2}{4}
\end{equation*}
with equality if and only if the following two assertions hold: \\
(i) each vertex of degree greater than $2$ is a quasi-pendant vertex in $T$; \\
(ii) each $T_{i}'$ ($i=1,2,\cdots,k$) is a path $P_6$.
\end{lemma}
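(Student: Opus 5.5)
The plan is an induction on $n$, using the reduction operation to pass to trees in which every quasi-pendant vertex has degree $2$. The key observation is a counting identity.

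Let $u$ be a pendant vertex of $T$, adjacent to a quasi-pendant vertex $v$ with $d(v)=s+1\geq 3$. Deleting $u,v$ splits $T-\{u,v\}$ into $s$ subtrees, and each of them receives its own copy of $P_2$. Hence the reduction graph $T'$ is a forest with $s$ components $T_1,\dots,T_s$ satisfying
$$\sum_{i=1}^{s}|T_i| = n-2+2s .$$
Each $T_i$ is again a reduced tree with $|T_i|<n$, since every branch at $v$ other than $\{u\}$ has at least $2$ vertices. By Lemma \ref{lem:tree} and additivity of $m_L(1)$ over components,
$$m_{L(T)}(1)=\sum_i m_{L(T_i)}(1).$$
If each $T_i$ satisfies $m_{L(T_i)}(1)\leq (|T_i|-2)/4$, then
$$\sum_i \frac{|T_i|-2}{4}=\frac{n-2+2s-2s}{4}=\frac{n-2}{4},$$
so the bound propagates exactly. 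Equality for $T$ then holds if and only if it holds for every $T_i$.

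Because components may have fewer than $6$ vertices, I would first verify the bound directly for the small reduced trees.
\begin{itemize}
\item $P_2$ has $m_L(1)=0$.
\item $P_4$ has spectrum $0,2,2\pm\sqrt2$, so $m_L(1)=0$.
\item The reduced trees of order $5$ are checked by hand.
\end{itemize}
In every case the inequality $m_L(1)\leq (n-2)/4$ holds, and it is strict. This shows that equality can only arise from components of order at least $6$.

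In the base situation every quasi-pendant vertex has degree $2$. If $T$ has a pendant $\mathcal{P}^3$, delete it using Lemma \ref{lem:path}. This gives $m_{L(T)}(1)=m_{L(T_1)}(1)$ with $|T_1|=n-3$, and the induction hypothesis yields $(n-5)/4<(n-2)/4$. I must check two things here.
\begin{itemize}
\item $T_1$ is still reduced. If the attachment vertex becomes pendant or a new quasi-pendant vertex appears, I would instead delete a pendant vertex and use Lemma \ref{lem:edge}(ii), or re-reduce and compare.
\item The degenerate small trees are handled. For example, $P_6$ reduces to $P_3$ with $m_L(1)=1=(6-2)/4$; this is exactly the extremal case.
\end{itemize}
If $T$ has no $\mathcal{P}^3$, then Theorem \ref{thm:tree} gives $m_{L(T)}(1)\leq (n-5)/6<(n-2)/4$ for $n\geq 7$. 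If one wishes to avoid relying on a later result, I would argue directly: delete a suitable edge near a pendant $P_2$ attached to a branch vertex, and use Lemma \ref{lem:edge}(i) with the induction hypothesis on the two pieces.

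The main obstacle is the equality characterization. From the identity above, equality propagates through reductions exactly when every component is extremal. In the base case, equality forces $T$ to have a pendant $\mathcal{P}^3$ whose removal leaves an extremal tree of order $n-3$. I would show that among trees whose quasi-pendant vertices all have degree $2$, this happens only for $T=P_6$. Pulling back through the reduction operations then gives the two conditions. Condition (ii) says every final reduction component is $P_6$. Condition (i) follows because a non-quasi-pendant vertex of degree at least $3$ survives all reductions. It would then lie in some final component, which therefore cannot be a path, let alone $P_6$. Conversely, if (i) and (ii) hold, the additivity identity gives equality.
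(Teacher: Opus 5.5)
The paper does not prove this lemma; it quotes it from Tian--Wang--Song \cite{Tian1}, so there is no in-paper argument to compare against. Your route is sound. The reduction operation preserves $\sum_C(|C|-2)$ over components, and it preserves the degree and quasi-pendant status of every surviving vertex. Hence the bound $(|C|-2)/4$ is exactly additive under reduction, and equality holds for $T$ exactly when it holds in each component. By the strictness for $P_4,P_5$, those components must then have order at least $6$. The same preservation shows that (ii) already implies (i).

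What your approach shows is that, inside this paper, the lemma is a short corollary of Theorem \ref{thm:tree}. It is not circular, because Section 3 uses only Lemmas \ref{lem:edge}, \ref{lem:path}, \ref{lem:tree} and \ref{lem:star}. The $n=6$ restriction of Theorem \ref{thm:tree} also causes no gap: in the base situation the only reduced tree on $6$ vertices is $P_6$, which contains a $\mathcal{P}^3$. The paper's citation keeps Lemma \ref{lem:reduced} independent of Theorem \ref{thm:tree}. Your version is self-contained but makes the no-$\mathcal{P}^3$ base case rest entirely on Section 3.

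Three things need tightening.

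\textbf{Reducedness of $T_1=T-\mathcal{P}^3$.} This needs no fallback. If the attachment vertex $w$ has $d_T(w)\ge 3$, nothing changes. If $d_T(w)=2$, its other neighbour $w'$ gains exactly one pendant neighbour. That breaks reducedness only if $w'$ was already a (degree-$2$) quasi-pendant vertex, i.e.\ $T=P_6$. This matters because your suggested fallback through Lemma \ref{lem:edge}(ii) costs $+1$ and would not give the bound.

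\textbf{Equality in the base case.} Your statement here is backwards. When $T_1$ is reduced you get $m_{L(T)}(1)\le\max\{0,(n-5)/4\}<(n-2)/4$, so the removal can never leave an extremal tree when equality holds. Equality in the base case occurs exactly when $T_1$ fails to be reduced, that is, $T=P_6$.

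\textbf{The sketched way to avoid Theorem \ref{thm:tree}.} Deleting an edge and applying the induction hypothesis to both pieces does not close. It only yields $1+\frac{n_1-2}{4}+\frac{n_2-2}{4}=\frac{n}{4}>\frac{n-2}{4}$. It works only if one piece is known to have no Laplacian eigenvalue $1$ and order at least $5$, which is essentially the content of Section 3. So keep Theorem \ref{thm:tree} as the base-case input.
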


For a tree $T$ on $n(\geq 5)$ vertices, we call $T$ a {\it $P_2$-star} if there exists a central  vertex $u$ such that each component of $T-u$ is $P_2$. We call $T$ a {\it double $P_2$-star}, if it can be obtained from two $P_2$-stars by joining their central vertices with an edge.

\begin{lemma}\label{lem:star} \ Let $T$ be a $P_2$-star and $H$ be a double $P_2$-star. Then we have \\
(i){\rm \cite{Guo,Tian1}} \ $m_{L(T)}(1)=0$. \\
(ii){\rm \cite{Tian}} \ $m_{L(H)}(1)=0$.
\end{lemma}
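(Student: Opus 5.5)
The statement to prove is Lemma \ref{lem:star}: for a $P_2$-star $T$ and a double $P_2$-star $H$, $m_{L(T)}(1)=0=m_{L(H)}(1)$. Both parts are attributed to earlier work, but a direct verification is easy. I would argue straight from the eigenvalue equation $L x = x$, i.e. $(d(v)-1)x_v=\sum_{w\sim v}x_w$ for every vertex $v$, and show that it forces $x=0$.

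For part (i), let $u$ be the centre of $T$, and let the components of $T-u$ be the paths $a_i b_i$ ($1\le i\le k$), with $a_i\sim u$. Each $b_i$ is pendant, so its equation reads $0\cdot x_{b_i}=x_{a_i}$, giving $x_{a_i}=0$. The equation at $a_i$ (degree $2$) reads $x_{a_i}=x_u+x_{b_i}$, hence $x_{b_i}=-x_u$. The equation at $u$ reads $(k-1)x_u=\sum_i x_{a_i}=0$. Since $n\ge 5$ forces $k\ge 2$, we get $x_u=0$, and then every $x_{b_i}=0$. So $x=0$, and $1$ is not a Laplacian eigenvalue of $T$.

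For part (ii), let the two centres be $u$ and $v$, with $k\ge 2$ and $\ell\ge 2$ attached $P_2$'s respectively. This assumes the definition requires each of the two $P_2$-stars to have at least five vertices; if a centre could carry only one $P_2$, the case would need separate checking, and I would note this. The same local computation on each branch gives $x_{a_i}=0$ and $x_{b_i}=-x_u$ on $u$'s branches, and likewise $x_{a'_j}=0$ and $x_{b'_j}=-x_v$ on $v$'s branches. The equation at $u$ (degree $k+1$) becomes $k\,x_u=x_v$, and the equation at $v$ becomes $\ell\,x_v=x_u$. Substituting gives $k\ell\,x_u=x_u$, and $k\ell\ge 4$ forces $x_u=0$, hence $x_v=0$ and then $x=0$.

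There is no real obstacle here. The only point needing care is the degree bookkeeping at the centre: the coefficient $d(u)-1$ must be large enough, which is exactly where $n\ge 5$ (that is, $k\ge 2$) is used. If $k=1$ in part (i), $T$ would be $P_3$ and the argument would indeed fail.
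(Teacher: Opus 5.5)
Your proof is correct. It differs from the paper only in that the paper gives no argument at all: it quotes part (i) from \cite{Guo,Tian1} and part (ii) from \cite{Tian}.

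You instead show directly that $Lx=x$ has only the trivial solution, using the vertex equations $(d(v)-1)x_v=\sum_{w\sim v}x_w$. The pendant and degree-$2$ equations give $x_{a_i}=0$ and $x_{b_i}=-x_u$. Everything then collapses to $(k-1)x_u=0$ in (i), and to the system $kx_u=x_v$, $\ell x_v=x_u$ in (ii).

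Your hedge in (ii) is unnecessary. The paper defines a $P_2$-star to have at least $5$ vertices, so $k,\ell\ge 2$ automatically. Moreover, your computation only uses $k\ell\neq 1$, so the sole degenerate case is $k=\ell=1$. There $H$ is $P_6$, and $1$ genuinely is a Laplacian eigenvalue, just as $k=1$ in (i) gives $P_3$.

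The citation route keeps the paper short. Your route is self-contained and takes a few lines. It also makes visible exactly which order hypotheses are needed and why the excluded small cases ($P_3$, $P_6$) behave differently. That fits well with $P_6$ being the extremal component in Lemma \ref{lem:reduced}.
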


\section{Proof of Theorem 1.1}
\noindent
{\bf Proof.} \ \ Assume that the set $\Omega$ consists of all reduced trees $T$ of order $n\geq 7$ without $\mathcal{P}^3$ and with each quasi-pendant vertex of degree $2$.
As shown in Fig. \ref{fig.3:eps}, when $n\leq 10$, $T$ is a $P_2$-star or a double $P_2$-star. Then we get $m_{L(T)}(1)=0$ by lemma \ref{lem:star}. When $n=11$, $T$ is a $P_2$-star or the extremal tree in Fig. \ref{fig.1:eps}. Thus, through the calculation and lemma \ref{lem:star}, we get $m_{L(T)}(1)=0$ or $m_{L(T)}(1)=1=\frac{11-5}{6}$.

\begin{figure}[htbp]
  \centering
  \includegraphics[width=0.65\textwidth]{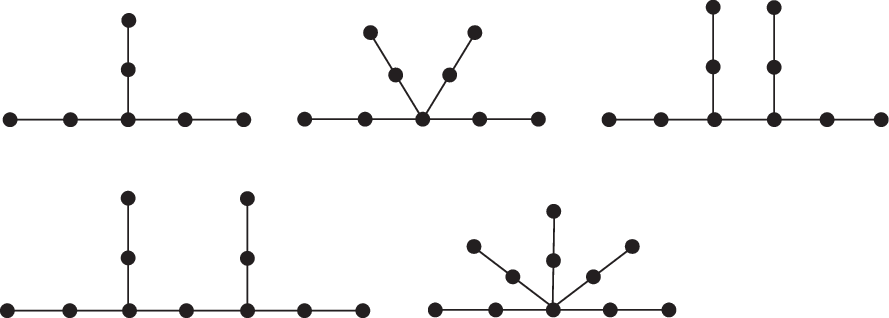}
  \caption{The trees belong to $\Omega$ with order less than $12$.}
  \label{fig.3:eps}
\end{figure}

Now, suppose that $n\geq 12$ and we prove $m_{L(T)}(1)<\frac{n-5}{6}$ by induction on order $n$. Let tree $T\in \Omega$. Since $n\geq 12$ and $p(T)=q(T)$, the diameter of $T$ is at least $4$, i.e., $diam(T)\geq 4$. Let $P_{k}=v_1 v_2 \cdots v_{k}(k\geq 5)$ be a diametrical path of $T$.
If $diam(T)=4$ or $diam(T)=5$, then $T$ is a $P_2$-star or a double $P_2$-star. Thus we obtain $m_{L(T)}(1)=0$ by Lemma \ref{lem:star}.

Suppose that $diam(T)=6$. In this case, let $T+u$ be the graph obtained from $T$ and a new vertex $u$ by joining $u$ with $v_4$. Using reduction operation to $T+u$ with respect to $u$ and $v_4$, we denote by $T_{i}' \ (i=1,2,\cdots, t)$ all the components of the reduction graph of $T+u$. Noting that $diam(T)=6$, we have $3\leq diam(T_{i}')\leq 4$ \ $(1\leq i\leq t)$, i.e., each $T_{i}'$ is either a $P_4$ or a $P_2$-star. Then, it follows that $m_{L(T_{i}')}(1)=0$ $(1\leq i\leq t)$ by Lemmas \ref{lem:path} and \ref{lem:star}. Applying Lemma \ref{lem:edge} and \ref{lem:tree} and $n\geq 12$, we obtain
$$m_{L(T)}(1)\leq 1+m_{L(T+u)}(1)=1+\sum_{i=1}^{t} m_{L(T_{i}')}(1)=1<\frac{n-5}{6}.$$

In the following, suppose that $diam(T)\geq 7$.
Since $v_4$ and $v_5$ lie on the diametrical path of $T$, then $d(v_4)\geq 2$ and $d(v_5)\geq 2$. Thus, we proceed by discussing the degrees of $v_4$ and $v_5$:\ $d(v_4)=d(v_5)=2$, or $d(v_4)\geq 3$, or $d(v_4)=2$ and $d(v_5)\geq 3$.

\vskip 2mm
\noindent
{\bf Case 1.} \ Assume that $d(v_4)=d(v_5)=2$. \\
\indent
Denote by $T_{v_5}$ (resp., $T_{v_6}$) the component of $T-e_{v_5 , v_6}$ (deleting the edge $e_{v_5 , v_6}$ from $T$), which contains the vertex $v_5$ (resp., $v_6$). Clearly, $T_{v_5}$ is a $P_2$-star and $m_{L(T_{v_5})}(1)=0$ by Lemma \ref{lem:star}. Then it follows from Lemma \ref{lem:edge} that
\begin{equation}\label{e1}
  m_{L(T)}(1)\leq 1+m_{L(T_{v_5})}(1)+m_{L(T_{v_6})}(1)=1+m_{L(T_{v_6})}(1).
\end{equation}
Note that $|T_{v_6}|=|T|-|T_{v_5}|\leq n-7$ and  $T_{v_6}$ falls into three cases: (a) $T_{v_6}\in \Omega$; (b) $T_{v_6} \notin \Omega$ and $T_{v_6}$ contains no $\mathcal{P}^3$; (c) $T_{v_6}$ contains $\mathcal{P}^3$. Accordingly, we prove this case by the following three subcases.

(a) If $T_{v_6}\in \Omega$, then $|T_{v_6}|\geq 7$.

Applying the induction hypothesis to $T_{v_6}$, we have
$m_{L(T_{v_6})}(1)\leq \frac{|T_{v_6}|-5}{6}.$
Thus it follows from (\ref{e1}) and $|T_{v_6}|\leq n-7$ that
\begin{equation*}
 \begin{array}{rcl}
  m_{L(T)}(1)&\leq & 1+m_{L(T_{v_6})}(1)\\
  &\leq& 1+\frac{|T_{v_6}|-5}{6}\\
  &<& \frac{n-5}{6}.
  \end{array}
\end{equation*}

(b) Suppose that $T_{v_6} \notin \Omega$ and $T_{v_6}$ contains no $\mathcal{P}^3$.

For this subcase, we see that $v_6$ is a pendant vertex and $v_7$ is a quasi-pendant vertex with $d(v_7)\geq 3$ in $T_{v_6}$. Then we use the reduction operation to $T_{v_6}$ with respect to $v_6$ and $v_7$. Let $H_i'\ (1\leq i\leq s)$ be all the components of the reduction graph of $T_{v_6}$. Then each $H_i'$ is a path $P_4$ or a tree of $\Omega$ or a tree containing $\mathcal{P}^3$. If some component $H_i'$ contains a $\mathcal{P}^3$, then by Lemma \ref{lem:path} we turn to $H_i'-\mathcal{P}^3$ instead of $H_i'$. Without loss of generality, assume that $H_i'-\mathcal{P}^3$ contains no $\mathcal{P}^3$, then $H_i'-\mathcal{P}^3$ is a path $P_2$ or a tree of $\Omega$ or a tree containing a quasi-pendant vertex of degree greater than 2. If $H_i'-\mathcal{P}^3$ is a tree containing a quasi-pendant vertex of degree greater than 2, then its structure is similar with $T_{v_6}$ and we could handle it with above process again. At last, it suffices to count the components belonging to $\Omega$. If there is no component belonging to $\Omega$ after doing the process above, then $m_{L(T_{v_6})}(1)=0$, and thus
$$m_{L(T)}(1)\leq 1+m_{L(T_{v_6})}(1)=1<\frac{n-5}{6}.$$
Now suppose that there are $k(\geq 1)$ components, say $\Gamma_i'\ (1\leq i\leq k)$, belonging to $\Omega$. Then by Lemmas \ref{lem:tree} and \ref{lem:path} and the induction hypothesis on $\Gamma_i'$,
\begin{equation*}
 \begin{array}{rcl}
m_{L(T_{v_6})}(1)&=&\sum\limits_{i=1}^{k} m_{L(\Gamma_{i}')}(1)\\
&\leq& \sum\limits_{i=1}^{k} \frac{|\Gamma_{i}'|-5}{6}\\
&\leq& \frac{|T_{v_6}|+2(k-1)-5k}{6}\\
&=&\frac{|T_{v_6}|-3k-2}{6}.
  \end{array}
\end{equation*}
Consequently, it follows by (\ref{e1}) and $|T_{v_6}|\leq n-7$ that
\begin{equation*}
 \begin{array}{rcl}
m_{L(T)}(1)&\leq& 1+m_{L(T_{v_6})}(1)\\
&\leq& 1+\frac{|T_{v_6}|-3k-2}{6}\\
&\leq& \frac{n-3k-3}{6}\\
&\leq& \frac{n-6}{6}\\
&<&\frac{n-5}{6}.
  \end{array}
\end{equation*}

(c) Assume that $T_{v_6}$ contains a $\mathcal{P}^3$.

In this subcase, we say $m_{L(T_{v_6})}(1)=m_{L(T_{v_6}-\mathcal{P}^3)}(1)$ by Lemma \ref{lem:path}. Without loss of generality, let $T_{v_6}-\mathcal{P}^3$ contain no $\mathcal{P}^3$. If $T_{v_6}-\mathcal{P}^3$ is a path $P_2$, then $T_{v_6}$ is a path $P_5$ and $m_{L(T_{v_6})}(1)=0$, and thus by (\ref{e1})
$$m_{L(T)}(1)\leq 1+m_{L(T_{v_6})}(1)=1<\frac{n-5}{6}.$$
Otherwise, $T_{v_6}-\mathcal{P}^3$ returns to the subcase $(a)$ or $(b)$, which indicates that $$m_{L(T)}(1)<\frac{n-5}{6}.$$

\vskip 2mm
\noindent
{\bf Case 2.} \ Assume that $d(v_4)\geq 3$.

In this case, first suppose that $d(v_4)=3$ and there is exactly one $\mathcal{P}^2$ adjacent to $v_4$. Denote by $T_{v_4}$ (resp., $T_{v_5}$) the component of $T-e_{v_4,v_5}$, which contains the vertex $v_4$ (resp., $v_5$).
Then $|T_{v_4}|\geq 8$ and $m_{L(T_{v_4})}(1)=0$ from Lemmas \ref{lem:path} and \ref{lem:star}. The graph $T_{v_5}$ has three cases: $T_{v_5}\in \Omega$; $T_{v_5} \notin \Omega$ and containing no $\mathcal{P}^3$; $T_{v_5}$ containing $\mathcal{P}^3$. Applying parallel discussion with $T_{v_6}$ in Case 1, one can also derive that
$m_{L(T)}(1)<\frac{n-5}{6}.$

Next, denote by $T+u$ the graph obtained from $T$ and a new vertex $u$ by joining $u$ with $v_4$. Applying reduction operation to $T+u$ with respect to $u$ and $v_4$, we let $T_{v_5}'$ be the component of the reduction graph of $T+u$, which contains $v_5$. Then each of other components of the reduction graph of $T+u$ is a path $P_4$ or a $P_2$-star, which contains no Laplacian eigenvalue 1 by Lemma \ref{lem:path} or Lemma \ref{lem:star}. Thus it follows by Lemmas \ref{lem:edge} and \ref{lem:tree} that
$$m_{L(T)}(1)\leq 1+m_{L(T+u)}(1)=1+m_{L(T_{v_5}')}(1).$$
Note that $|T_{v_5}'|\leq n-8$ and $T_{v_5}'\in \Omega$ or $T_{v_5}'$ contains a $\mathcal{P}^3$. Using parallel proof with $T_{v_6}$ in (a) or (c) of Case 1, we also obtain that $m_{L(T)}(1)<\frac{n-5}{6}.$

\vskip 2mm
\noindent
{\bf Case 3.} \ Assume that $d(v_4)=2$ and $d(v_5)\geq 3$.

Denote by $T+u$ the graph obtained by adding a new vertex $u$ adjacent to $v_5$ of $T$. Applying reduction operation on $T+u$ with respect to $u$ and $v_5$, we let $T_{v_6}'$ be the component of the reduction graph of $T+u$ that contains $v_6$. Then $|T_{v_6}'|\leq n+1-8=n-7$ holds. Except for $T_{v_6}'$, each component of the reduction graph of $T+u$ is a path $P_4$, a $P_2$-star  or a tree obtained by joining a pendant vertex of a $P_3$ to the central  vertex of a $P_2$-star, which contains no Laplacian eigenvalue 1 by Lemmas \ref{lem:path} or Lemma \ref{lem:star}. Thus it follows from Lemmas \ref{lem:edge} and \ref{lem:tree} that
$$m_{L(T)}(1)\leq 1+m_{L(T+u)}(1)=1+m_{L(T_{v_6}')}(1).$$
It is not hard to see that $T_{v_6}'$ satisfies the situation (a) or (c) of Case 1 above. Hence, we could also derive that $m_{L(T)}(1)<\frac{n-5}{6}$.

Combining the above process, it is proved that $m_{L(T)}(1)<\frac{n-5}{6}$ when $n\geq 12$. Recalling the beginning discussion with $7\leq n\leq 11$, we say that $m_{L(T)}(1)=\frac{n-5}{6}$ if and only if $n=11$ and $T$ is isomorphic to the graph in Fig. \ref{fig.1:eps}.

As a consequence, the proof of Theorem \ref{thm:tree} is completed. \hfill$\square$

\section{Proof of Theorem 1.2}
\noindent
{\bf Proof} \ \ Let $G$ be a reduced graph with $n(\geq 6)$ vertices and $m$ edges. We prove this theorem by induction on the first Betti number $c=m-n+1$. The vertices on a cycle of $G$ are called cycle-vertices. First, we show that the conclusion holds for $c=0$ and $c=1$.
If $c=0$, then $G$ is a reduced tree. It follows from Lemma  \ref{lem:reduced} that
$$m_{L(G)}(1)\leq \frac{n-2}{4}= c+\frac{n-2}{4},$$
and equality holds if and only if those two assertions hold.

If $c=1$, then $G$ is a reduced unicyclic graph. We proceed by considering whether there exists a quasi-pendant vertex on the unique cycle of $G$. The proof is divided into the following two cases.

\vskip 2mm
\noindent
{\bf Case 1.} \ Assume that there exists a quasi-pendant vertex on the unique cycle of $G$.

Let $u$ be a quasi-pendant vertex on the cycle and let $v$ be another cycle-vertex with $v\sim u$. Applying Lemma \ref{mainlemma} in terms of $u$ and $v$, we denote by $\Gamma$ the resultant graph. Then $\Gamma$ is a reduced tree of order $n+2$. Thus Lemmas \ref{mainlemma} and \ref{lem:reduced} imply that
$$m_{L(G)}(1)=m_{L(\Gamma)}(1)\leq \frac{(n+2)-2}{4}=\frac{n}{4}<1+\frac{n-2}{4}.$$

\vskip 2mm
\noindent
{\bf Case 2.} \ None of the cycle-vertices is a quasi-pendant vertex.

First, suppose that $G$ is the cycle $C_{n}$. It is known that the Laplacian eigenvalues of $C_n$ are $2-2\cos \frac{2k\pi}{n} \ (k=0,1,\,\cdots,\,n-1)$. Then $m_{L(C_{n})}(1)=2$, if $n\equiv 0(mod\ 6)$; otherwise, $m_{L(C_{n})}(1)=0$. Thus we obtain
\begin{equation*}
	m_{L(C_{n})}(1)\leq 1+\frac{n-2}{4},
\end{equation*}
and equality holds if and only if $G=C_6$.

Next, suppose $G$ is not a cycle. Then there is a cycle-vertex, say $x$, with $d(x)> 2$ in $G$.
On the one hand, if there exists an edge $e$ on the cycle of $G$ such that $x$ is not a quasi-pendant vertex in $G-e$ with degree also greater than 2, then $G-e$ is a reduced tree but not satisfying the assertion (i) of Lemma \ref{lem:reduced}. Thus it follows from Lemmas \ref{lem:edge} and \ref{lem:reduced} that
$$m_{L(G)}(1)\leq 1+m_{L(G-e)}(1)<1+\frac{n-2}{4}.$$
On the other hand, if there is no that kind of edge on the cycle of $G$, then we just need to consider the following situations.

(i) \ The length of the cycle is 3 in $G$ and there is exactly one cycle-vertex with degree 3 (i.e., $G$ has a local structure depicted as $G_1$ in Fig. \ref{fig.5:eps}).

\begin{figure}[htbp]
	\centering
	\includegraphics[width=0.8\textwidth]{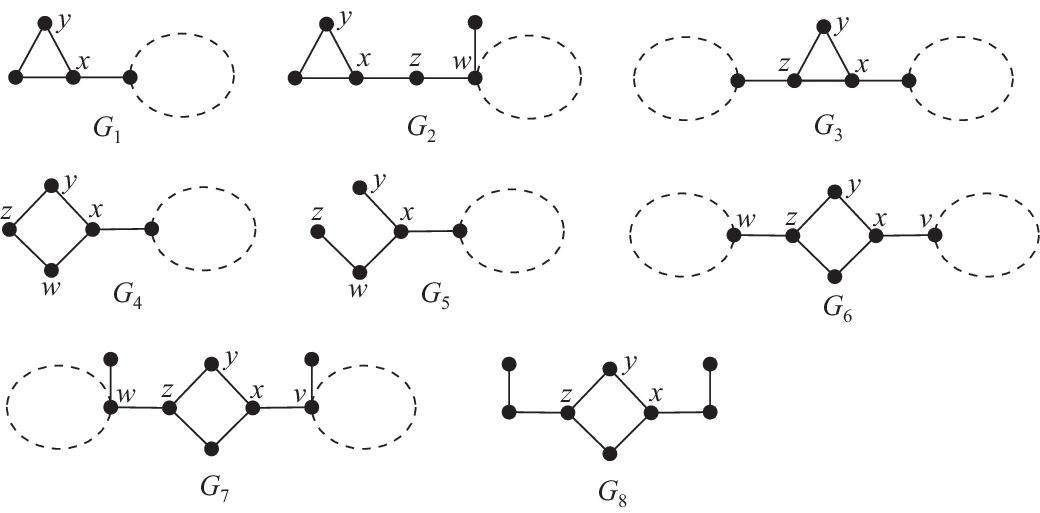}
	\caption{The graphs $G_1\thicksim G_6$ with the dashed cycle meaning the remaining part.}
	\label{fig.5:eps}
\end{figure}

For this situation, we consider the graph $G_1-e_{xy}$, which is a reduced tree clearly. Then by Lemmas \ref{lem:edge} and \ref{lem:reduced}, we get
\begin{equation}\label{e2}
m_{L(G_1)}(1)\leq 1+m_{L(G_1-e_{xy})}(1)\leq 1+\frac{n-2}{4}.
\end{equation}
Actually, $m_{L(G_1)}(1)$ cannot attain the upper bound. If $|G_1|=6$, then it is not hard to see that $m_{L(G_1)}(1)=0<1+\frac{n-2}{4}=2$. Now, let $|G_1|\geq 7$ and suppose for a contradiction that $m_{L(G_1)}(1)=1+\frac{n-2}{4}$. Then it follows from (\ref{e2}) that
\begin{equation*}
	m_{L(G_1-e_{xy})}(1)=\frac{n-2}{4}.
\end{equation*}
Thus, $G_1-e_{xy}$ is the extremal graph characterized in Lemma \ref{lem:reduced}, each  final reduction component of which is a path $P_6$. That is to say, $G$ must have a local structure depicted as $G_2$ in Fig. \ref{fig.5:eps}. Denote by $G_w'$ and $G_z'$ the two components obtained by applying Lemma \ref{mainlemma} to $G_2$ in terms of $w$ and $z$. Clearly, $m_{L(G_z')}(1)=0$ and $G_w'$ is a reduced tree. Hence, by Lemmas \ref{mainlemma} and \ref{lem:reduced},
\begin{equation*}
 \begin{array}{rcl}
m_{L(G_2)}(1)&=&m_{L(G_z')}(1)+m_{L(G_w')}(1)\\
&=& m_{L(G_w')}(1)\leq \frac{|G_w'|-2}{4}\\
&=& \frac{(n-4)-2}{4}<1+\frac{n-2}{4},
  \end{array}
\end{equation*}
contradicting with $m_{L(G_1)}(1)=1+\frac{n-2}{4}$. As a result, for this situation we say that $$m_{L(G)}(1)<1+\frac{n-2}{4}.$$

(ii) \ $G$ has a local structure depicted as $G_3$ in Fig. \ref{fig.5:eps}.

For graph $G_3$, we consider $G_3-e_{xz}$, a reduced tree, to prove $m_{L(G_3)}(1)<1+\frac{n-2}{4}$. Since $d(x)=d(y)=d(z)=2$ in $G_3-e_{xz}$, then we say that the component containing $\{x, y, z\}$ in the final reduction components of $G_3-e_{xz}$ cannot be a path $P_6$. Thus by Lemma \ref{lem:reduced},
$m_{L(G_3-e_{xz})}(1)< 1+\frac{n-2}{4}.$
It follows from Lemma \ref{lem:edge} that
\begin{equation*}
m_{L(G_3)}(1)\leq 1+m_{L(G_3-e_{xz})}(1)< 1+\frac{n-2}{4}.
\end{equation*}

(iii) \ $G$ has a local structure depicted as $G_4$ in Fig. \ref{fig.5:eps}.

For graph $G_4$, we delete the edge $e_{yz}$ from $G_4$, obtaining the graph $G_5$ (see Fig. \ref{fig.5:eps}). Applying Lemma \ref{mainlemma} to $G_5$ in terms of $x$ and $w$, we obtain two components: a path $P_4$ (containing $w, z$) and a reduced tree, written as $T_x'$ (containing $x, y$). Then by Lemma \ref{mainlemma},
\begin{equation}\label{e3}
m_{L(G_5)}(1)=m_{L(P_4)}(1)+m_{L(T_x')}(1)=m_{L(T_x')}(1).
\end{equation}
If $|T_x'|=4$ or $5$ (i.e., $T_x'$ is a $P_4$ or $P_5$), then $m_{L(G_5)}(1)=0$ from (\ref{e3}), and thus by Lemma \ref{lem:edge} $$m_{L(G_4)}(1)\leq 1+m_{L(G_5)}(1)=1<1+\frac{n-2}{4}.$$
If $|T_x'|\geq 6$, then by Lemmas \ref{lem:edge} and \ref{lem:reduced} and (\ref{e3})
\begin{equation*}
 \begin{array}{rcl}
m_{L(G_4)}(1)&\leq&  1+m_{L(G_5)}(1)=1+m_{L(T_x')}(1)\\
&\leq& 1+ \frac{|T_x'|-2}{4}=1+\frac{(n-2)-2}{4}\\
&<& 1+\frac{n-2}{4}.
  \end{array}
\end{equation*}

(iv) \ $G$ has a local structure depicted as $G_6$ in Fig. \ref{fig.5:eps}.

First, considering the reduced tree $G_6-e_{xy}$, we obtain from Lemmas \ref{lem:edge} and \ref{lem:reduced} that \begin{equation}\label{e4}
  m_{L(G_6)}(1)\leq 1+m_{L(G_6-e_{xy})}(1)\leq 1+\frac{n-2}{4}.
\end{equation}
Suppose that $m_{L(G_6)}(1)=1+\frac{n-2}{4}$, then $m_{L(G_6-e_{xy})}(1)=\frac{n-2}{4}$ by (\ref{e4}), and thus $G_6-e_{xy}$ satisfies those two assertions in Lemma \ref{lem:reduced}. To guarantee each final reduction component of $G_6-e_{xy}$ is $P_6$, we say that the vertex $v$ in $G_6$ is a quasi-pendant vertex.

Next, Considering the reduced tree $G_6-e_{zy}$ by parallel discussion with above, one could obtain that the vertex $w$ in $G_6$ is also a quasi-pendant vertex. That is to say, $G$ has a local structure depicted as $G_7$ in Fig. \ref{fig.5:eps}. If $|G_7|=8$, then $G$ is isomorphic to $G_8$ in Fig. \ref{fig.5:eps} and $m_{L(G_8)}(1)=0<1+\frac{n-2}{4}$ by calculation. If $|G_7|>8$, we apply Lemma \ref{mainlemma} to $G_7$ in terms of $\{w,z\}$ and $\{x,v\}$ respectively, and denote the resultant graph by $G_7'$. Then $G_8$ is one component of $G_7'$ with $m_{L(G_8)}(1)=0$.
Without loss of generality, let the other two components (say $H_1$ and $H_2$) of $G_7'$ be of order not less than 6 (otherwise they are paths $P_2$, $P_4$ or $P_5$). Hence by Lemmas \ref{mainlemma} and \ref{lem:reduced},
\begin{equation*}
  \begin{array}{rcl}
 m_{L(G_7)}(1)&=&m_{L(G_7')}(1)=m_{L(G_8)}(1)+m_{L(H_1)}(1)+m_{L(H_2)}(1)\\
 &=& m_{L(H_1)}(1)+m_{L(H_2)}(1)\\
 &\leq& \frac{|H_1|-2}{4}+\frac{|H_2|-2}{4}\\
 &=& \frac{n-8}{4}<1+\frac{n-2}{4}.
  \end{array}
\end{equation*}

Combining the above discussions, it is proved for a reduced unicyclic graph $G$ on $n\geq 6$ vertices that $m_{L(G)}(1)\leq 1+\frac{n-2}{4}$, and $m_{L(G)}(1)= 1+\frac{n-2}{4}$ if and only if $G$ is $C_6$. In other words, if $n\geq 7$, then $m_{L(G)}(1)< 1+\frac{n-2}{4}$ for a reduced unicyclic graph $G$.

Now suppose that $G$ contains at least two cycles, i.e., $c\geq 2$. If $n=6$, we check directly by computer and obtain that
$$m_{L(G)}(1)<c+\frac{n-2}{4}.$$
For $n\geq 7$, we assume that $m_{L(G)}(1)<c+\frac{n-2}{4}$ holds for any reduced graph $G$ with cyclomatic number $c-1$. On the one hand, if there is a cycle-vertex (say $u$) that is a quasi-pendant vertex in $G$, then let $v\thicksim u$ and $v$ be also a cycle-vertex and quasi-pendant vertex. Denote by $G'$ the graph obtained by applying Lemma \ref{mainlemma} to $G$ in terms of $u$ and $v$. Then $G'$ is a reduced graph with cyclomatic number $c-1$. Thus, by Lemma \ref{mainlemma} and induction hypothesis on $G'$,
\begin{equation*}
  \begin{array}{rcl}
m_{L(G)}(1)&=&m_{L(G')}(1)<c-1+\frac{|G'|-2}{4}\\
&=&c-1+\frac{(n+2)-2}{4}= c+\frac{n-4}{4}\\
&<& c+\frac{n-2}{4}.
  \end{array}
\end{equation*}
On the other hand, if every cycle-vertex is not a quasi-pendant vertex in $G$, then let $e$ be an edge on one cycle of $G$. Note that $G-e$ is a reduced graph with cyclomatic number $c-1$. Hence, by Lemma \ref{lem:edge} and induction hypothesis on $G-e$,
$$m_{L(G)}(1)\leq 1+m_{L(G-e)}(1)<1+c-1+\frac{|G-e|-2}{4}=c+\frac{n-2}{4}.$$

Consequently, we prove that $m_{L(G)}(1)\leq c+\frac{n-2}{4}$ for a reduced graph on $n\geq 6$ vertices, and  $m_{L(G)}(1)= c+\frac{n-2}{4}$ if and only if $c=0$ and $G$ is a reduced tree satisfying those two assertions in Lemma \ref{lem:reduced}, or $c=1$ and $G=C_6$.
The proof of Theorem \ref{thm:circle} is completed. \hfill$\square$


{\small
\begin{thebibliography}{90}
\bibitem{Akbari} S. Akbari, E.R. van Dam, M.H. Fakharan, Trees with a large Laplacian  eigenvalue multiplicity, Linear Algebra Appl. 586 (2020) 262–273.

\bibitem{Andrade} E. Andrade, D.M. Cardoso, G. Pastén, O. Rojo, On the Faria’s  inequality for the Laplacian and signless Laplacian spectra: a unified approach, Linear Algebra Appl. 472 (2015) 81–96.

\bibitem{Barik} S. Barik, A.K. Lal, S. Pati, On trees with Laplacian eigenvalue one, Linear Multilinear Algebra 56 (6) (2008) 597–610.

\bibitem{Du} Z. Du, The multiplicity of eigenvalues of $A_{\alpha}$ of trees, Ann. Comb. (2025), https://doi.org/10.1007/s00026-025-00796-5.

\bibitem{Faria} I. Faria, Permanental roots and the star degree of a graph, Linear Algebra Appl. 64 (1985) 255–265.

\bibitem{Guo} J.-M. Guo, L. Feng, J. Zhang, On the multiplicity of Laplacian eigenvalues of graphs, Czechoslov. Math. J. 60 (135) (2010) 689–698.

\bibitem{Gutman} I. Gutman, I. Sciriha, On the nullity of line graphs of trees, Discrete Math. 232 (2001) 35–45.

\bibitem{Grone} R. Grone, R. Merris, V.S. Sunder, The Laplacian spectrum of a graph, SIAM J. Matrix Anal. Appl. 11 (1990) 218–238.

\bibitem{Tian1} F. Tian, J. Wang, W. Song, Upper bound of the multiplicity of Laplacian eigenvalue 1 of trees, Linear Algebra Appl. 724 (2025) 108–119.

\bibitem{Tian} F. Tian, D. Wong, On the multiplicity of 1 as a Laplacian eigenvalue of a graph, Discrete Math. 349 (2026) 115030.

\bibitem{WangZ} Z. Wang, Q. Chen, J. Guo, X. Li. A relation between multiplicity of 1 as a Laplacian eigenvalue and induced matching numbers in trees. Discrete Math. 348(5) (2025) 114401.

\bibitem{DWong} X. Wang, Q. Zhou, D. Wong, C. Zhang, F. Tian, The multiplicity of laplacian eigenvalue two in a connected graph with a perfect matching. Linear Algebra Appl. 609 (2021) 152-162.

\bibitem{Wen} F. Wen, Q. Huang, On the multiplicity of Laplacian eigenvalues for unicyclic graphs. Czech. Math. J. 72 (147) (2022) 371-390.

\bibitem{WongZX} D. Wong, W. Zhen, S. Xu, Characterization of trees with Laplacian eigenvalue multiplicity one less than the number of pendant vertices. Discrete Math. 349 (2026) 114799.

\bibitem{Yang} J. Yang, L. Wang, Line graphs of trees with the largest eigenvalue multiplicity, Linear Algebra Appl. 676 (2023) 56–65.
\end{thebibliography}
}
\end{document}